\documentclass[12pt,reqno]{amsart}
\usepackage{hyperref}
\usepackage{amsmath}
\usepackage{amssymb,amsthm}
\usepackage{amsfonts,cmtiup,comment,stmaryrd}
\usepackage{cite}
\usepackage{mathrsfs,cmtiup}

\makeatletter
\def\blfootnote{\xdef\@thefnmark{}\@footnotetext}
\makeatother

\newtheorem*{theorem*}{Theorem}
\newtheorem*{corollary*}{Corollary}
\newtheorem{theorem}{Theorem}[section]
\newtheorem{lemma}[theorem]{Lemma}
\newtheorem{proposition}[theorem]{Proposition}

\theoremstyle{definition}

\newtheorem*{definition*}{Definition}
\numberwithin{equation}{section}

\begin{document}

\title{Length parameters of finite groups\\ and their Hall subgroups}

\author{Evgeny Khukhro}
\address{E. I. Khukhro: Charlotte Scott Research Centre for Algebra, University of Lincoln, U.K.}
\email{khukhro@yahoo.co.uk}

\author{Pavel Shumyatsky}

\address{P. Shumyatsky: Department of Mathematics, University of Brasilia, DF~70910-900, Brazil}
\email{pavel@unb.br}

\thanks{The first author was partially supported by the International Center for Mathematics at SUSTech in Shenzhen. The second author was supported by FAPDF and CNPq.}
\keywords{Finite group; Hall subgroup; generalized Fitting height; non-$p$-soluble length}
\subjclass[2020]{20E34, 20D25, 20D20, 20D05}

\begin{abstract} Let $\pi$ be a set of primes containing $2$ and an odd prime $p$. It is proved that if a finite group $G$ has a Hall $\pi$-subgroup $H$, then the non-$p$-soluble length of $G$ is bounded above by the generalized Fitting height of $H$. The proof uses the fact, obtained in \cite{bel} using the classification of finite simple groups, that a finite simple group of order divisible by $p$ cannot have a nilpotent Hall $\{2,p\}$-subgroup. As a corollary, it is proved that if in addition $H$ is soluble, then the non-$p$-soluble length of $G$ is bounded above  by $2l_2(H)+1$, where $l_2(H)$ is the $2$-length of $H$.

\end{abstract}

\maketitle

\section{Introduction}
In \cite{bel,mor}, there have been obtained some criteria for the existence of nilpotent or
abelian Hall $\pi$-subgroups in finite groups. In particular, with the use of the classification of finite simple groups, it was proved in \cite[Theorem~2.2]{bel} that for any odd prime $p$, a finite simple group of order divisible by $p$ cannot have a nilpotent Hall $\{2,p\}$-subgroup. In this note we use this result to show that if a finite group $G$ has a Hall $\pi$-subgroup $H$ for a set of primes $\pi$ containing $2$ and an odd prime $p$, then the non-$p$-soluble length of $G$ is bounded above by the generalized Fitting height of $H$.

We now recall some definitions and fix some notation. The generalized Fitting subgroup $F^*(G)$ of a finite group $G$ is the characteristic subgroup generated by the Fitting subgroup $F(G)$ and all the subnormal quasisimple subgroups (a group $Q$ is quasisimple if $[Q,Q]=Q$ and $Q/Z(Q)$ is simple). The \textit{generalized Fitting series} of $G$ is defined recursively by $F_{1}^{*}(G)=F^*(G)$ and $F_{i+1}^{*}(G)/F_{i}^{*}(G)=F^{*}(G/F_{i}^{*}(G))$. The least number $h$ such that $F_{h}^{*}(G)=G$ is called the
\textit{generalized Fitting height} $h^{*}(G)$ of $G$.

For a given prime $p$, the \textit{non-$p$-soluble length} $\lambda_p(G)$ of a finite group $G$ is the minimum number of non-$p$-soluble factors in a normal series where each factor is either $p$-soluble or a direct product of non-abelian simple groups of orders divisible by $p$. In particular, $G$ is $p$-soluble exactly when its non-$p$-soluble length is~$0$.

We can now state our main result.

\begin{theorem*}\label{t}
Suppose that a finite group $G$ has a Hall $\pi$-subgroup $H$ for a set of primes $\pi$ containing $2$ and an odd prime $p$. Then the non-$p$-soluble length of $G$ is bounded above by the generalized Fitting height of $H$, that is, $\lambda_p(G)\leqslant h^*(H)$.
\end{theorem*}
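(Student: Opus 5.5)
Induction on $h^*(H)$, by descending from $G$ to a suitable normal subgroup $N$ with $\lambda_p(G/N)\le 1$ whose Hall subgroup $H\cap N$ has smaller generalized Fitting height.

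\textbf{Reductions.} First we may assume $O_{p'}(G)=1$ and, more generally, that the $p$-soluble radical $R$ of $G$ is trivial. Factoring out a normal subgroup $N$ does not increase $h^*$ of the Hall subgroup: $HN/N\cong H/(H\cap N)$ is a Hall $\pi$-subgroup of $G/N$, and $h^*$ of a quotient is at most $h^*$ of the group. Nor does it change $\lambda_p$ when $N$ is $p$-soluble. So passing to $G/R$ is harmless. Then the socle $M$ of $G$ (more precisely, the product of minimal normal subgroups) is a direct product $S_1\times\cdots\times S_k$ of nonabelian simple groups of order divisible by $p$, since every minimal normal subgroup is non-$p$-soluble. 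Also $C_G(M)=1$.

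\textbf{Key step.} We show that $H\cap F^*(H)$-type information forces $F^*(H)\le M$, or rather $F^*(H)\cap M$ controls things. We aim for the statement $F^*(H)\le M$. Then $H/(H\cap M)$ is a Hall subgroup of $G/M$ with $h^*(H/(H\cap M))\le h^*(H)-1$, because $F^*(H)\le H\cap M$. By induction, $\lambda_p(G/M)\le h^*(H)-1$, and since $M$ contributes one non-$p$-soluble factor, $\lambda_p(G)\le h^*(H)$.

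To prove $F^*(H)\le M$, set $K=F^*(H)$.

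Consider first $F(H)$. Its $2$-part and $p$-part are normal in $H$. For each $i$, $H\cap S_i$ is a Hall $\pi$-subgroup of $S_i$, because $H\cap M$ is a Hall subgroup of $M$ and Hall subgroups of a direct product decompose. It is nontrivial, since $2$ and $p$ both divide $|S_i|$. Let $q\in\{2,p\}$. If $O_q(H)$ centralizes $H\cap M$ modulo issues, we derive a contradiction as follows. An element of $F(H)$ of order prime to $q$ normalizes, and acts trivially on, the Sylow $q$-subgroup $O_q(H)$. We want to use \cite[Theorem~2.2]{bel}: if some $S_i$ has a Hall $\{2,p\}$-subgroup (contained in $H\cap S_i$) that is nilpotent, we get a contradiction. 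The intended approach is to show that $C_H(O_2(F^*(H))O_p(F^*(H)))$ is small, and that a nilpotent Hall $\{2,p\}$-subgroup appears otherwise.

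Concretely, $F^*(H)=F(H)E(H)$ and $C_H(F^*(H))\le F^*(H)$. Project $F^*(H)$ into $\mathrm{Aut}(M)$, permuting the $S_i$. We show that $F^*(H)$ normalizes each $S_i$, or else pass to the stabilizers. We then show that the image of $F^*(H)$ in $\mathrm{Aut}(S_i)/S_i$ is trivial, using the structure of $\mathrm{Out}(S_i)$ together with the fact that $F^*(H)$ contains the centralizer of $F^*(H)$.

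\textbf{Main obstacle.} The main obstacle is the previous step: controlling elements of $F^*(H)$ outside $M$, which act as outer or permuting automorphisms. The first attempt would be the following argument. Suppose $x\in F^*(H)\setminus M$. Then $F^*(H)\cap M$ is normalized by $H\cap M$, a Hall subgroup, and is subnormal in $H\cap M$. If $F^*(H)\cap S_i$ is trivial for some $i$, then $[F^*(H),H\cap S_i]\le F^*(H)\cap (H\cap M)$ leads to $H\cap S_i$ centralizing $F^*(H)$. Hence $H\cap S_i\le F^*(H)$, a contradiction. Therefore $F^*(H)\cap S_i$ is a nontrivial subnormal subgroup of $H\cap S_i$.

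One would then show that the Hall $\{2,p\}$-part of $H\cap S_i$ is centralized appropriately, so that it becomes nilpotent, contradicting \cite{bel}. This requires care with components of $H$ lying inside $S_i$. Alternatively, it may be cleaner to choose $N$ as the full preimage in $G$ of the normal closure of $F^*(H)$. If these arguments stall, we instead prove the weaker but sufficient claim $\lambda_p(G/M)\le h^*(H/F^*(H))$ directly, by an induction applied to $G/M$ with the image of $H$.
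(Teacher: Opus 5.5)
\textbf{There is a genuine gap: your key step, $F^*(H)\leqslant M$ with $M=S_1\times\cdots\times S_k$ the socle, is false.} Your reduction to $R_p(G)=1$ and your final induction are fine, but the argument cannot be completed as planned.

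\emph{Counterexample.} Take $G=\mathrm{PGL}_2(11)$, $p=3$, $\pi=\{2,3\}$. Here $|G|=2^3\cdot 3\cdot 5\cdot 11$, $R_3(G)=1$ and $M=\mathrm{PSL}_2(11)$. The dihedral group $H=D_{24}$, the normalizer of a nonsplit torus, is a Hall $\{2,3\}$-subgroup of $G$. Then $F^*(H)=F(H)=C_{12}$. But $\mathrm{PSL}_2(11)$ has no element of order $12$, since its element orders divide $11$, $5$ or $6$. So $F^*(H)\cap M=C_6$ and $F^*(H)\not\leqslant M$.

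In particular, your plan to show that the image of $F^*(H)$ in $\mathrm{Aut}(S_i)/S_i$ is trivial must fail: elements of $F(H)$ can induce outer (here diagonal) automorphisms. Your fallback does not repair this. The image of $H$ in $G/M$ is $H/(H\cap M)$, not $H/F^*(H)$. Without $F^*(H)\leqslant M$ you have no argument that $h^*(H/(H\cap M))<h^*(H)$.

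\textbf{The correct target is weaker.} What you need is that $F^*(H)$ lies in the kernel $K$ of the permutation action of $G$ on $\{S_1,\dots,S_k\}$. This suffices because $K/M$ embeds in $\prod_i\mathrm{Out}(S_i)$, which is soluble by the Schreier conjecture (a consequence of the classification). Hence $\lambda_p(K)\leqslant 1$, and you induct on $G/K$, where the image of $H$ has generalized Fitting height at most $h^*(H)-1$.

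Even this weaker claim is not proved in your proposal. The sentence ``we show that $F^*(H)$ normalizes each $S_i$, or else pass to the stabilizers'' is not an argument. What is missing is how the theorem that a simple group of order divisible by $p$ has no nilpotent Hall $\{2,p\}$-subgroup actually gets used.

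\emph{Fitting subgroup.} Suppose $g\in F(H)$ maps $S_i$ to $S_j\neq S_i$. Then $[S_i\cap H,g]\leqslant F(H)$, because $F(H)\trianglelefteq H$. Each generator $a^{-1}a^g$ has $S_i$-component $a^{-1}$, so this nilpotent group projects onto all of $S_i\cap H$. That is a Hall $\pi$-subgroup of $S_i$, which would then be nilpotent, which is impossible.

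\emph{Components.} Choose a nonabelian simple factor $U$ of $F^*(H)/F(H)$ outside the image of $K\cap F^*(H)$. Then $[[K\cap H,\hat U],\hat U]\leqslant F(H)$, where $\hat U$ is the preimage of $U$. Take $g\in\hat U$ whose image in $U$ has prime order $q\geqslant 3$. Then $g$ moves some $S_1$ along an orbit $S_1\to S_2\to S_3$ with $S_3\neq S_1$. The group $[[S_1\cap H,g],g]\leqslant F(H)$ again projects onto $S_1\cap H$, giving the same contradiction. The choice $q\geqslant 3$ is essential, since for $q=2$ you could have $S_3=S_1$ and the projection argument breaks.

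Without an argument of this kind, your key step has no proof.
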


The proof relies on the classification of finite simple groups. As a corollary, we obtain a stronger conclusion in the case of a soluble Hall subgroup.

\begin{corollary*}
 \label{c}
 Suppose that a finite group $G$ has a soluble Hall $\pi$-subgroup $H$ for a set of primes $\pi$ containing $2$ and an odd prime $p$. Then the non-$p$-soluble length of $G$ is bounded above by $2l_2(H)+1$, where $l_2(H)$ is the $2$-length of $H$.
\end{corollary*}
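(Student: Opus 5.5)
The plan is to deduce the Corollary from the Theorem by replacing $H$ with a smaller Hall subgroup, namely a Hall $\{2,p\}$-subgroup. For such a group the Fitting height is controlled directly by the $2$-length.

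\emph{Step 1: pass to a Hall $\{2,p\}$-subgroup.} Since $H$ is soluble, Hall's theorem gives a Hall $\{2,p\}$-subgroup $K$ of $H$. Because $|G:H|$ is a $\pi'$-number and $|H:K|$ is a $\{2,p\}'$-number, $|G:K|$ is coprime to $2p$. So $K$ is a Hall $\{2,p\}$-subgroup of $G$. Applying the Theorem with $\pi=\{2,p\}$ gives $\lambda_p(G)\leqslant h^*(K)$. Since $K$ is soluble, $F^*(K/N)=F(K/N)$ for every normal subgroup $N$, so $h^*(K)=h(K)$, the ordinary Fitting height.

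\emph{Step 2: bound $h(K)$ by the $2$-length.} Since $K$ is a soluble $\{2,p\}$-group, its upper $2$-series
\[
1\leqslant O_{2'}(K)\leqslant O_{2',2}(K)\leqslant O_{2',2,2'}(K)\leqslant\cdots
\]
reaches $K$, and every $2'$-factor in it is a $p$-group. Thus the series has $l_2(K)$ factors that are $2$-groups and at most $l_2(K)+1$ factors that are $p$-groups. All of these factors are nilpotent normal sections, so this is a normal series of $K$ with at most $2l_2(K)+1$ nilpotent factors. Hence $h(K)\leqslant 2l_2(K)+1$.

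\emph{Step 3: monotonicity of $2$-length.} It remains to show $l_2(K)\leqslant l_2(H)$. This holds because $2$-length is inherited by subgroups of soluble groups. Intersect the upper $2$-series of $H$ with $K$. Each factor of the resulting normal series of $K$ embeds in the corresponding factor of the series of $H$, so it is a $2$-group or a $2'$-group. The number of $2$-factors does not increase, and the $2$-length of $K$ is at most the number of $2$-factors in any such normal series.

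Combining the three steps gives $\lambda_p(G)\leqslant h(K)\leqslant 2l_2(K)+1\leqslant 2l_2(H)+1$. None of the steps is really hard, since all the depth lies in the Theorem. The only point that needs care is Step 1: one must check that the Hall $\{2,p\}$-subgroup of $H$ is genuinely a Hall subgroup of $G$, so that the Theorem applies with the smaller set of primes $\{2,p\}$.
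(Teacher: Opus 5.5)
Your proposal is correct and takes essentially the same route as the paper: pass to a Hall $\{2,p\}$-subgroup of $H$, which is also Hall in $G$; bound its Fitting height by $2l_2+1$ using a normal series with $2$- and $p$-factors; use monotonicity of the $2$-length; and apply the Theorem. The only difference is that you supply details the paper leaves implicit, namely the upper $2$-series, the equality $h^*=h$ for soluble groups, and the intersection argument for $l_2(K)\leqslant l_2(H)$.
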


This corollary is of independent interest, since the Fitting height of a soluble finite group cannot be bounded above in terms of its $2$-length.

We introduced the generalized Fitting height and the non-$p$-soluble length of a finite group in \cite{khu-shu15}, but the bounds for the non-soluble length (which is the same as non-$2$-soluble length) and generalized Fitting height had been implicitly used earlier, for example, in the reduction of the Restricted Burnside Problem to soluble and nilpotent groups in the Hall--Higman paper \cite{ha-hi}. Such bounds also play an important role
in the study of profinite groups (see for example \cite{aze-shu, aze-shu2, det-mor-shu23, det-mor-shu,
khu-shu14, khu-shu21, khu-shu21a, khu-shu23, pin-shu, shu20, shu20a, shu, shu-thi, wil}).
Some recent results concerning non-$p$-soluble length and generalized Fitting height of finite groups can be found in \cite{acc-shu-sil, bor-sha, con-shu, con-shu17, dem-des-shu, det-mor-shu22, det-shu,
flp19, flp, gur-tra, khu-shu15a, khu-shu15b, khu-shu17, khu-shu-tra, mur-vas, shu-sic-tot}.

 \section{Preliminaries}
 Let $\pi$ be a set of primes. Recall that $H$ is a Hall $\pi$-subgroup of a finite group $G$ if its order $|H|$ is divisible only by the primes in $\pi$, while its index $|G:H|$ is not divisible by any prime in $\pi$.
It is an easy consequence of Lagrange's theorem that if $N$ is a normal subgroup of $G$, then $H\cap N$ is a Hall $\pi$-subgroup of $N$. This immediately implies that if $S$ is a subnormal subgroup of~$G$, then $H\cap S$ is a Hall $\pi$-subgroup of $S$. It is also clear that $HN/N$ is a Hall $\pi$-subgroup of the quotient $G/N$.

 Let $G$ be a finite group, and $p$ a prime.
One can easily show that the non-$p$-soluble length (defined in the Introduction) behaves well under taking normal subgroups and homomorphic images. It is also clear that an extension of a normal subgroup of non-$p$-soluble length $k$ by a group of non-$p$-soluble length $l$ has non-$p$-soluble length at most $k+l$.

One of the ways of realizing the non-$p$-soluble length of $G$ is based on the so-called $p$-kernel subgroups. Let $R_p(G)$ denote the $p$-soluble radical of $G$, which is the largest normal $p$-soluble subgroup. If $R_p(G)\ne G$, that is, $G$ is not $p$-soluble, then in the quotient $\bar G=G/R_p(G)$ the socle $\mathop{Soc}(\bar G)$, which is the product of all minimal normal subgroups of $\bar G$, is a direct product $\mathop{Soc}(\bar G)=S_1\times\dots\times S_m$ of non-abelian simple groups $S_i$ of orders divisible by $p$.
The group $G$ induces by conjugation a permutational action on the set $\{S_1, \dots , S_m\}$. We call the kernel $K_p(G)$ of this action the \emph{$p$-kernel subgroup} of $G$.
Clearly, $K_p(G)$ is the full inverse image in $G$ of $\bigcap_i N_{\bar G}(S_i)$.

The groups of outer automorphisms of finite simple groups are soluble, which is known as the validity of Schreier's Conjecture confirmed by the classification. It follows that the quotient of $K_p(G)$ by the inverse image of the socle $\mathop{Soc}(\bar G)$ is soluble; in particular, $K_p(G)$ has non-$p$-soluble length~1.

For uniformity, we also put $K_p(G)=G$ if $R_p(G)= G$, that is, when $G$ is $p$-soluble and its non-$p$-soluble length is~$0$. Thus, we have in all cases the following.

\begin{lemma}[{see, for example, \cite[Lemma~2.1]{khu-shu15}}]
\label{l-1}
The non-$p$-soluble length of $K_p(G)$ is at most~$1$.
\end{lemma}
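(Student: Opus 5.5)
We will exhibit an explicit normal series of $K_p(G)$ with at most one non-$p$-soluble factor. Two cases arise. If $G$ is $p$-soluble, then by convention $K_p(G)=G$, and $G$ has non-$p$-soluble length $0$, so there is nothing to prove. Hence we assume $R_p(G)\ne G$ and write $R=R_p(G)$, $\bar G=G/R$, and $\mathop{Soc}(\bar G)=S_1\times\dots\times S_m$ with the $S_i$ non-abelian simple of orders divisible by $p$. Let $M$ be the full inverse image of $\mathop{Soc}(\bar G)$ in $G$. Since each $S_i$ is normal in $\bar K_p(G)=\bigcap_i N_{\bar G}(S_i)$ and $\mathop{Soc}(\bar G)$ is normal in $\bar G$, we have $R\le M\le K_p(G)$, and both are normal subgroups of $K_p(G)$.

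We take the series
\[
1\le R\le M\le K_p(G).
\]
The factor $R$ is $p$-soluble by definition of the $p$-soluble radical. The factor $M/R\cong S_1\times\dots\times S_m$ is a direct product of non-abelian simple groups of orders divisible by $p$, so it is an admissible factor and is the only non-$p$-soluble one. It remains to check that $K_p(G)/M$ is $p$-soluble, and in fact soluble.

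This last step is the main point. The conjugation action of $\bar K_p(G)$ on $\mathop{Soc}(\bar G)$ normalizes each $S_i$, which gives a homomorphism
\[
\varphi\colon \bar K_p(G)\to \prod_i \mathrm{Aut}(S_i).
\]
Its kernel is $C_{\bar G}(\mathop{Soc}(\bar G))\cap \bar K_p(G)$. The subgroup $C_{\bar G}(\mathop{Soc}(\bar G))$ is normal in $\bar G$ and meets $\mathop{Soc}(\bar G)$ trivially, because the socle has trivial centre. If it were nontrivial, it would contain a minimal normal subgroup of $\bar G$. That subgroup would lie in the socle, a contradiction. So $\varphi$ is injective.

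The image of $\mathop{Soc}(\bar G)$ under $\varphi$ is $\prod_i \mathrm{Inn}(S_i)$. Therefore $\bar K_p(G)/\mathop{Soc}(\bar G)$ embeds into $\prod_i \mathrm{Out}(S_i)$, which is soluble by Schreier's Conjecture, as recalled above. Hence $K_p(G)/M$ is soluble, and in particular $p$-soluble.

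The series above therefore has exactly one non-$p$-soluble factor, and by definition the non-$p$-soluble length of $K_p(G)$ is at most~$1$.
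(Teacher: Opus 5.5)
Your proof is correct and follows the paper's argument: the paper also uses the series $1\le R_p(G)\le M\le K_p(G)$, with $M$ the inverse image of $\mathop{Soc}(\bar G)$, and gets $K_p(G)/M$ soluble from Schreier's Conjecture; you simply make explicit the embedding of $\overline{K_p(G)}/\mathop{Soc}(\bar G)$ into $\prod_i \mathrm{Out}(S_i)$ via $C_{\bar G}(\mathop{Soc}(\bar G))=1$. One small wording point: $M\le K_p(G)$ holds because each $S_i$ is a direct factor, hence normal, in $\mathop{Soc}(\bar G)$, so the socle normalizes every $S_i$ --- not because $S_i$ is normal in $\overline{K_p(G)}$.
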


One can also define the higher $p$-kernel subgroups by induction: $K_{p,1}(G)= K_{p}(G)$, and $K_{p,i+1}(G)$ is the full inverse image of $K_{p}(G/K_{p,i}(G))$. Actually, the non-$p$-soluble length of $G$ is the least $i$ such that $G/K_{p,i}$ is $p$-soluble, although we do need this fact.

Recall that the $p$-length $l_p(G)$ of a $p$-soluble finite group $G$ is the number of $p$-factors in a shortest normal series of $G$ in which each factor is either a $p$-groups or has order coprime to $p$. By the Feit--Thompson theorem \cite{fei-tho}, $2$-soluble groups are soluble. A soluble finite group has Hall $\pi$-subgroups for any set of primes $\pi$.

\section{Proofs}

We now prove the Theorem. Recall that $G$ is a finite group that has a Hall $\pi$-subgroup $H$ of order divisible by $2$ and a prime $p\ne 2$. We need to show that the non-$p$-soluble length of $G$ is at most the generalized Fitting height of $H$, that is, $\lambda_p(G)\leqslant h^*(H)$. We shall proceed by a straightforward induction on $h^*(H)$ once the following proposition is proved.

\begin{proposition}\label{p}
The generalized Fitting subgroup of $H$ is contained in the $p$-kernel of $G$, that is, $F^*(H)\leqslant K_p(G)$.
\end{proposition}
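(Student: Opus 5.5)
We may assume $G$ is not $p$-soluble; otherwise $K_p(G)=G$ and there is nothing to prove. Since $K_p(G)$ contains $R_p(G)$, we pass to $\bar G=G/R_p(G)$. Here $\bar H=HR_p(G)/R_p(G)$ is a Hall $\pi$-subgroup of $\bar G$, and the image of $F^*(H)$ lies in $F^*(\bar H)$, because images of nilpotent normal and subnormal quasisimple subgroups are again of that kind. It therefore suffices to prove $F^*(\bar H)\le K_p(\bar G)$, and we may assume $R_p(G)=1$. Then $\mathrm{Soc}(G)=S_1\times\dots\times S_m$ with each $S_i$ non-abelian simple of order divisible by $p$, and $K_p(G)=\bigcap_i N_G(S_i)$. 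Put $F=F^*(H)$. We must show that $F$ normalizes each $S_i$.

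Each $S_i$ is subnormal in $G$, so $H_i=H\cap S_i$ is a Hall $\pi$-subgroup of $S_i$. Since $2,p\in\pi$ and $|S_i|$ is divisible by both $2$ and $p$, the group $H_i$ has order divisible by $2$ and $p$. Its Hall $\{2,p\}$-subgroups are Hall $\{2,p\}$-subgroups of $S_i$. Likewise $H\cap \mathrm{Soc}(G)=H_1\times\dots\times H_m$ is a normal subgroup of $H$, and $H$ permutes the factors $H_i$ exactly as it permutes the $S_i$. Indeed, $S_i$ is recovered from $H_i$: the $S_j$ are the minimal normal subgroups of the socle, so $h$ normalizes $S_i$ if and only if $S_i^h=S_i$, if and only if $H_i^h=H_i$, because $H_i^h=H\cap S_i^h$ and the $H_j$ are nontrivial and pairwise distinct.

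Now we use the structure of $F^*(H)$. Let $N=H\cap\mathrm{Soc}(G)\trianglelefteq H$. Since $C_H(F^*(H))\le F^*(H)$ and $N\trianglelefteq H$, the subgroup $F$ acts on $N$, and $F\cap N$ is normal in $N$. The key claim is that $F\cap N\ne 1$ can be arranged factorwise. Consider $N\cap F^*(H)=F^*(N)$; this holds because $F^*(N)=F^*(H)\cap N$ for normal $N$. Each $H_i$ is not nilpotent of the relevant kind, so we argue by contradiction. Suppose some $f\in F$ moves $S_1$ to $S_j\ne S_1$. Then $[H_1,f]$ contains elements $x^{-1}x^f$ with $x\in H_1$, lying in $H_1\times H_j$ with nontrivial components in both. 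Since $[N,F]\le F\cap N=F^*(N)=F^*(H_1)\times\dots\times F^*(H_m)$, we get $x^{-1}\in F^*(H_1)$ for all $x\in H_1$, that is, $H_1=F^*(H_1)$. Replacing $H_1$ by its Hall $\{2,p\}$-subgroup $P_1$ (so $x$ is taken in $P_1$), we see that $F^*(H_1)=H_1$ is a central product of a nilpotent group and quasisimple groups. Here is the main obstacle: we must deduce from this that $S_1$ has a nilpotent Hall $\{2,p\}$-subgroup, contradicting \cite[Theorem~2.2]{bel}.

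To handle this, apply the argument with $N$ replaced by the solvable/nilpotent pieces. Write $H_1=F(H_1)E(H_1)$. If $E(H_1)\ne1$, its components are quasisimple $\pi$-groups subnormal in $S_1$, and one iterates the same reasoning inside $S_1$: they are subnormal in $H_1$ but not in $S_1$. The cleaner route is to first show that $F$ normalizes each component and each Sylow subgroup of $F(H_1)$, and then use the Hall $\{2,p\}$-subgroup $F(H_1)_{\{2,p\}}$. I expect that controlling $E(H_1)$ is where real care is needed. A fallback is to apply \cite[Theorem~2.2]{bel} in the form that no simple group of order divisible by $p$ has a nilpotent Hall $\{2,p\}$-subgroup, using that $H_1=F^*(H_1)$ forces a nilpotent Hall $\{2,p\}$-subgroup whenever $E(H_1)=1$. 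The case $E(H_1)\ne1$ would then be handled by induction on $|G|$, applied to the simple sections.
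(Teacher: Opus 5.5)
\textbf{There is a genuine gap.} Your reduction to $R_p(G)=1$ and your commutator computation are correct. The trouble is that they only yield $H_1=F^*(H_1)$, and that is not a contradiction. If $\pi\supseteq\pi(S_1)$ then $H_1=S_1$. Less trivially, $A_5$ is a Hall $\{2,3,5\}$-subgroup of $\mathrm{PSL}_2(11)$. So simple groups of order divisible by $2p$ can have quasinilpotent Hall $\pi$-subgroups, and no argument about $S_1$ and $H_1$ alone can finish.

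Your last paragraph does not supply the missing step. It lists strategies (iterating inside $S_1$, normalizing components of $H_1$, induction on $|G|$ over simple sections) without carrying any out. Parts of it are also incoherent: components of $H_1$ are said to be both subnormal and not subnormal in $S_1$, and $H_1$ need not have a Hall $\{2,p\}$-subgroup unless you already know it is, say, nilpotent. The real defect is that you let $f$ range over all of $F^*(H)$. Then $[H_1,f]$ only lands in $F^*(H)$, which gives quasinilpotence. To invoke \cite[Theorem~2.2]{bel} you need a subgroup lying in $F(H)$ whose projection to $S_1$ is all of $H_1$.

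The missing idea is to split $F^*(H)=F(H)E(H)$ and treat the two parts separately.

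For $f\in F(H)$, your computation works verbatim with $F(H)\cap N=F(N)=F(H_1)\times\dots\times F(H_m)$. It forces $H_1$ to be nilpotent, contradicting \cite[Theorem~2.2]{bel}. This is the paper's Lemma~\ref{l-2}.

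For the non-nilpotent part, the paper argues by contradiction:
\begin{itemize}
\item It takes a simple factor $U$ of $F^*(H)/F(H)$ not contained in the image of $K_p(G)\cap F^*(H)$. Then $U$ centralizes that image, so $[[K_p(G)\cap H,\hat U],\hat U]\le F(H)$.
\item It picks $g\in\hat U$ whose image in $U$ has odd prime order $q$. Since $g^q\in F(H)\le K_p(G)$ but $g\notin K_p(G)$, $g$ has an orbit $S_1\to S_2\to S_3\to\cdots$ of length $q\ge 3$, so $S_3\ne S_1$.
\item Hence $[[H_1,g],g]$ lies in $F(H)$ and still projects onto $H_1$ in $S_1$. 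Again $H_1$ is nilpotent, a contradiction.
\end{itemize}

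A shorter way to finish from where you are uses the standard fact that a component $Q$ of $H$ either lies in, or centralizes, any normal subgroup of $H$. Apply it to $N$. If $Q\le N$, then $Q\le\mathrm{Soc}(G)\le K_p(G)$. If $[Q,N]=1$ and some $f\in Q$ had $S_1^f\ne S_1$, then $x=x^f\in S_1\cap S_1^f=1$ for all $x\in H_1$, which is impossible since $|H_1|$ is even. So $E(H)\le K_p(G)$. Together with $F(H)\le K_p(G)$, this gives $F^*(H)\le K_p(G)$.
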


\begin{proof}
 We can clearly assume that the $p$-soluble radical of $G$ is trivial. Let $\mathop{Soc}(G)=S_1\times\cdots \times S_k$ be the socle of $G$, where the $S_i$ are nonabelian simple groups of orders divisible by~$p$. Of course, the orders of $S_i$ are also divisible by $2$ by the Feit--Thompson theorem~\cite{fei-tho}. We firstly deal with the Fitting subgroup $F(H)$.

 \begin{lemma}\label{l-2}
 The Fitting subgroup of $H$ is contained in the $p$-kernel of $G$, that is, $F(H)\leqslant K_p(G)$.
 \end{lemma}

\begin{proof}
We argue by contradiction: suppose that $g\in F(H)\setminus K_p(G)$, which means that $S_i^g=S_j$ for some $i\ne j$. Consider $S_i\cap H$, which is a Hall $\pi$-subgroup of $S_i$, since $S_i$ is subnormal in $G$. Note that both $2$ and $p$ divide $|S_i\cap H|$, since $2$ and $p$ divide $|S_i|$ and $2,p\in \pi$.

The commutator subgroup $[S_i\cap H ,g]$ is generated by the commutators $[a,g]=a^{-1}a^g$ for $a\in S_i\cap H$. Since $a^g\in S_j\ne S_i$, the projection of $[S_i\cap H ,g]$ onto $S_i$ is equal to $S_i\cap H$. Since $g$ belongs to $F(H)$, which is normalized by $S_i\cap H$, we have $[S_i\cap H,g]\leqslant F(H)$, so that the subgroup $[S_i\cap H,g]$ is nilpotent. Hence its projection onto $S_i$ is also nilpotent. As a result, $S_i$ has a nilpotent Hall $\pi $-subgroup $S_i\cap H$, and therefore also a nilpotent Hall $\{2,p\}$-subgroup of order divisible by $p$. This contradicts \cite[Theorem~2.2]{bel}.
\end{proof}

We proceed with the proof of Proposition~\ref{p}. If $F^*(H)=F(H)$, we are done by Lemma~\ref{l-2}. Otherwise, assuming the opposite, $K_p(G)\cap F^*(H)$ is a proper normal subgroup of $F^*(H)$ containing $F(H)$ by Lemma~\ref{l-2}. Since $F^*(G)/F(H)$ is a direct product of non-abelian simple factors, at least one of these factors, say, $U$ is not in the image of $K_p(G)\cap F^*(H)$ in $F^*(G)/F(H)$. Therefore $U$ has trivial intersection with the image of $K_p(G)\cap F^*(H)$ in $F^*(G)/F(H)$ and centralizes this image. Let $\hat U $ be the inverse image of $U$ in $H$. Note that $\hat U\cap K_p(G)\leqslant F(H)$. Then
\begin{equation}\label{e}
 [[K_p(G)\cap H, \hat U],\hat U]\leqslant [K_p(G)\cap F^*(H), \hat U]\leqslant F(H).
\end{equation}

We now choose an element $g\in \hat U$ such that its image in $U$ has prime order $q\geqslant 3$. Then $g\not\in K_p(G)$ and $g$ has an orbit of length $q$ in the permutational action on the set $\{S_1,\dots ,S_k\}$. Renumbering the factors $S_i$ if necessary, we can assume without loss of generality that $\{S_1,\dots, S_q\}$ is such an orbit where $S_1^g=S_2\ne S_1$ and $S_2^g=S_3\ne S_1$. The commutator subgroup $ [S_1\cap H ,g]$ is contained in $S_1\times S_2$ and its projection onto $S_1$ is equal to $S_1\cap H$, since it is generated by the commutators $a^{-1}a^g$, where $a$ runs over all elements of $S_1\cap H$, while $a^g\in S_2$.

The projection of the subgroup $[[S_1\cap H ,g],g]$ onto $S_1$ is also equal to $S_1\cap H$. Indeed, it is generated by the commutators of the form $(ab)^{-1}(ab)^g$, where $ab$ runs over all elements of $[S_1\cap H ,g]$ with $a\in S_1$ and $b\in S_2$. Here, $a$ runs over all elements of $S_1\cap H$ by the above, while $(ab)^g\in S_2\times S_3$. Thus we obtain that $[[S_1\cap H ,g],g]$ is generated by elements of the form $(ab)^{-1}(ab)^g=a^{-1}\cdot b^{-1}a^gb^g$, where $a$ runs over all elements of $S_1\cap H$, while $b^{-1}a^gb^g\in S_2\times S_3$.

By \eqref{e} we have $[[S_1\cap H ,g],g] \leqslant F(H)$, so that $[[S_1\cap H ,g],g]$ is nilpotent. Hence its projection onto $S_1$, which is $S_1\cap H$, is also nilpotent. But $S_1\cap H$ is a Hall $\pi$-subgroup of $S_1$, since $S_1$ is subnormal. Thus, $S_1$ has a nilpotent Hall $\pi$-subgroup, and therefore also a nilpotent Hall $\{2,p\}$-subgroup of order divisible by $p$, since both $2$ and $p$ divide $|S_1|$ and $2,p\in \pi$. This contradicts \cite[Theorem~2.2]{bel}.
\end{proof}

\begin{proof}[Proof of the Theorem]
 Induction on $h^*(H)$. If $h^*(H)=1$, then $H=F^*(H)\leqslant K_p(G)$ by Proposition~\ref{p}. Then $|G/K_p(G)|$ is not divisible by $p$ and therefore $G/K_p(G)$ is $p$-soluble. Since $\lambda_p(K_p(G))\leqslant 1$ by Lemma~\ref{l-1}, it follows that $\lambda_p(G)\leqslant 1$, as required.

 In general, since $F^*(H)\leqslant K_p(G)$ by Proposition~\ref{p}, the generalized Fitting height of the image $\bar H$ of $H$ in $G/K_p(G)$ is smaller than $h^*(H)$. By induction, $\lambda_p(G/K_p(G))\leqslant h^*(\bar H)\leqslant h^*(H)-1$. Since $\lambda_p(K_p(G))\leqslant 1$ by Lemma~\ref{l-1}, the result follows: $\lambda_p(G)\leqslant 1+\lambda_p(G/K_p(G))\leqslant 1+ h^*(H)-1=h^*(H)$.
\end{proof}

We now prove the Corollary.

\begin{proof}[Proof of the Corollary]
Recall that $G$ is a finite group that has a soluble Hall $\pi$-subgroup $H$ for a set of
primes $\pi$ containing $2$ and an odd prime $p$. We want to obtain a bound for the non-$p$-soluble length of $G$ in terms of the $2$-length of $H$. Since $H$ is soluble, it has a Hall $\{2,p\}$-subgroup $T$, which is also a Hall $\{2,p\}$-subgroup of $G$. Clearly, $T$ has a normal series of length at most $2l_2(T)+1$ each factor in which is either a $2$-group or a $p$-group. All these factors are nilpotent, and therefore the Fitting height of $T$ is at most $2l_2(T)+1$. The $2$-length of $T$ is at most the $2$-length of $H$. Applying the Theorem to $G$ and its Hall $\{2,p\}$-subgroup $T$ we obtain $\lambda_p(G)\leqslant 2l_2(T)+1\leqslant 2l_2(H)+1$.
\end{proof}

\end{document}